\newtheorem{lemma}{Lemma}[section]
\newtheorem{theorem}[lemma]{Theorem}
\newtheorem{coro}[lemma]{Corollary}
\newtheorem{rk}[lemma]{Remark}
\newtheorem*{theorem*}{Theorem}
\newtheorem*{lemma*}{Lemma}
\def\N{\mathbb{N}}
\def\C{\mathbb{C}}
\def\R{\mathbb{R}}
\def\Z{\mathbb{Z}}
\def\Q{\mathbb{Q}}
\def\Bad{\mathrm{Bad}}
\def\H{\mathcal{H}}
\def\W{\mathcal{W}}
\def\={\;=\;}
\def\.={\;\dot{=}\;}
\begin{document}

\title[Metric number theory of Fourier coefficients of Modular Forms]{Metric number theory of Fourier coefficients of Modular Forms}
\author{Paloma Bengoechea}
\address{ETH Zurich \\ Department of Mathematics\\ Ramistrasse 101\\8092 Zurich\\Switzerland}
\email{paloma.bengoechea@math.ethz.ch}
\thanks{Bengoechea's research is supported by SNF grant 173976.}

\begin{abstract} 
We discuss the approximation of real numbers by Fourier coefficients of newforms, following recent work of  Alkan, Ford and Zaharescu. The main tools used here, besides the (now proved) Sato-Tate Conjecture, come from 
metric number theory.
\end{abstract}

\maketitle

\section{Introduction}

Consider a newform 
$$
f(z)=\sum_{n=1}^\infty a_f(n) e^{2\pi i nz}
$$ 
 of  even integer weight $k$ for $\Gamma_0(N)$. Suppose that $f(z)$ is normalized so that $a_f(1)=1$. We denote by $S^{new}_k(N)$ the set of such functions. In \cite{AFZ} Alkan, Ford and Zaharescu approximate a given real number $x$ by the 
 normalized Fourier  coefficients 
\begin{equation}\label{Fnc}
a(n)=\dfrac{a_f(n)}{n^{(k-1)/2}},
\end{equation}
assuming that the $a_f(n)$ are integer and
 satisfy a divergence hypothesis.
 
It is well-known that the  coefficients \eqref{Fnc}
 satisfy
\begin{align}
&a(mn)=a(m)a(n)\quad\mbox{if $(m,n)=1$},\nonumber\\
&a(p^m)=a(p)a(p^{m-1})-a(p^{m-2})\quad\mbox{for $p$ prime},\label{recursion}\\
&|a(n)|\leq d(n),\nonumber
\end{align}
where $d(n)$ is the divisor function.
The inequality
was proven by Deligne as a consequence of his proof of the Weil Conjectures.
The results above determine everything about $a(n)$ except for the distribution
of the $ a(p)\in [-2,2]$. 
For any prime $p$, we define the angle $0\leq \theta_p\leq \pi$ such that
\begin{equation}\label{angle}
a(p)=2\cos \theta_p.
\end{equation}

The coefficients $a(p)/2$ are equidistributed in
  the interval $[-1,1]$ over primes $p$ with respect to different measures according to whether $f$ has complex multiplication (CM) or not.  
The Sato-Tate Conjecture, now a theorem due to Barnet-Lamb, Geraghty, Harris,
and Taylor \cite{BL}, asserts that, if $f$ has non-CM, then the $a(p)$ are equidistributed  with respect to the Sato-Tate measure
\begin{equation}\label{mST}
\dfrac{2}{\pi} \sqrt{1-t^2} \, dt.
\end{equation}
This means that
for any fixed interval $[\alpha,\beta]\subseteq [-1,1]$,
\begin{equation}\label{intST}
\# \left\{p\leq x:\, a(p)\in [\alpha,\beta]\right\}\sim \left(\dfrac{2}{\pi}\int_\alpha^\beta \sqrt{1-t^2} \, dt\right)\pi(x),
\end{equation}
where $\pi(x)$ as usual denotes the number of primes $p\leq x$.
For bounds on the error term in \eqref{intST} see \cite{BuK}, \cite{RT}.
 
 If $f$ has CM, it is a classical result that the $a(p)/2$ are equidistributed in $[-1,1]$ with respect to the measure 
\begin{equation} \label{mCM}
 \dfrac{1}{2}\delta_0([\alpha,\beta]) + \dfrac{1}{2\pi} \dfrac{1}{\sqrt{1-t^2}}\, dt,
 \end{equation}
 where $\delta_0$ denotes the Dirac measure at zero.
 This follows from the work of Deuring on the  equidistribution of the values of Hecke characters (see \cite{Hecke} and \cite{fite} for a nice 
 expository note). 
 




With the aim of a better understanding of the distribution of the coefficients \eqref{Fnc}, several studies on the distribution of primes $p$ for which $a(p)=c$ for some fixed $c\in\R$  and the distribution of integers $n$ for which $a(n)\neq 0$ have been carried (see for example \cite{S}, \cite{MMS}, \cite{El}, \cite{FM} \cite{RT}, etc.). In this note, we propose a different and new approach that relates to Diophantine approximation and metric number theory. We study the approximation of any real number  by the coefficients \eqref{Fnc} with techniques and results commonly used and developed in the area of metric number theory, such as Schmidt's Game (see  section 4 for the definition and properties of this game). We  hope that these will contribute to give a better understanding of the distribution of the $a(n)$. 
We also hope that the reader may find some interest in how we relate the two areas of modular forms and Diophantine approximation here.

The study of the approximation of any given  real number by the coefficients $a(n)$ has been 
introduced by 
 Alkan, Ford and Zaharescu in \cite{AFZ} with  the  theorem below. This
result was motivated and inspired by metric number theory connections exploited
in \cite{AGZ} and \cite{AFZ0}. 

\begin{theorem*}[AFZ]
Let 
$$
f(z)=\sum_{n=1}^\infty a_f(n)e^{2\pi inz}
$$
be a newform  in $S^{new}_k(N)$. Assume that the coefficients $a_f(n)$ are integer and satisfy 
\begin{equation}\label{condition}
\sum_{p} a(p)^2=\infty,
\end{equation}
where the summation is over all primes. Then for any real number $x$, there exists a positive constant $C_{f,x}$ depending only on $f$ and $x$, such that
\begin{equation}\label{thineq}
|a(n)-x|\leq\dfrac{C_{f,x}}{\log n}
\end{equation}
holds for infinitely many positive integers $n$.
\end{theorem*}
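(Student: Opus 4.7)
The plan is to approximate $x$ by $a(p^m)$ for a well-chosen prime $p$ and infinitely many exponents $m$. From the recursion \eqref{recursion} and $a(p)=2\cos\theta_p$ one obtains the Chebyshev identity $a(p^m) = \sin((m+1)\theta_p)/\sin\theta_p$, so $a(p^m)$ lies in $[-1/\sin\theta_p,\,1/\sin\theta_p]$ and, when $\theta_p/\pi$ is irrational, is dense there as $m$ varies.

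The first step is to select a prime $p$ (depending on $f$ and $x$) satisfying both (i) $\sin\theta_p < 1/|x|$, so that $x$ lies in the accessible range $[-1/\sin\theta_p, 1/\sin\theta_p]$, and (ii) $a(p) \notin \{0, \pm 1, \pm 2\}$. Since $a(p)$ is an integer by hypothesis, Niven's theorem (the integer values of $2\cos(\pi q)$ for rational $q$ are exactly $\{0, \pm 1, \pm 2\}$) makes (ii) equivalent to $\theta_p/\pi$ being irrational. Both conditions are supplied by Sato-Tate \eqref{intST}: primes with $a(p)$ arbitrarily close to $\pm 2$ (hence $\sin\theta_p$ arbitrarily small) occur with positive density, while each integer value $a(p) = c$ is attained on a set of primes of density zero. (In the CM case the analogous \eqref{mCM} plays the same role.)

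Next, choose $y \in \R$ with $\sin y = x\sin\theta_p$, possible by (i). Using $|\sin u - \sin v| \leq |u - v|$ together with the $2\pi$-periodicity of $\sin$ gives $|a(p^m) - x| \leq \|(m+1)\theta_p - y\|_{2\pi}/\sin\theta_p$, where $\|\cdot\|_{2\pi}$ denotes distance to the nearest multiple of $2\pi$. Since $\theta_p/\pi$ is irrational by (ii), a classical inhomogeneous Diophantine approximation result (via, say, the three-distance theorem or Minkowski's theorem) produces infinitely many positive integers $m$ with $\|(m+1)\theta_p - y\|_{2\pi} \leq C_p/m$, for a constant $C_p$ depending only on $\theta_p$ and $y$. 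Setting $n = p^m$ so that $\log n = m \log p$, we obtain $|a(n) - x| \leq C_{f,x}/\log n$ with $C_{f,x} = C_p \log p/\sin\theta_p$, for infinitely many $n$.

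The main obstacle is Step 1: arranging simultaneously $\sin\theta_p$ small and $\theta_p/\pi$ irrational is precisely where Sato-Tate (or its CM counterpart) is essential, and is the primary input from the theory of modular forms. The remaining work is a standard application of inhomogeneous Diophantine approximation combined with the Chebyshev description of the prime-power coefficients.
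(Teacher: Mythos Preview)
Your overall strategy---use the Chebyshev identity \eqref{regular}, fix a prime $p$ with $\sin\theta_p$ small and $\theta_p/\pi$ irrational, then apply Minkowski's inhomogeneous theorem---is exactly the route the paper takes (it proves the stronger Theorem~\ref{th refinement} this way, which immediately yields \eqref{thineq} via $m=\log n/\log p$). The gap is entirely in Step~1, in your justification that such a prime exists.

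You write ``since $a(p)$ is an integer by hypothesis,'' but the hypothesis is that the \emph{unnormalised} coefficients $a_f(n)$ are integers; the normalised $a(p)=a_f(p)/p^{(k-1)/2}$ is irrational whenever $a_f(p)\neq 0$ (because $k$ is even, so $(k-1)/2\notin\Z$). Hence Niven's theorem does not give the claimed equivalence: for instance $\theta_p=\pi/5$ yields $a(p)=2\cos(\pi/5)=(1+\sqrt5)/2$, which avoids $\{0,\pm1,\pm2\}$ yet has $\theta_p/\pi$ rational. Your fallback Sato--Tate argument is likewise insufficient as stated: the set of ``bad'' values $\{2\cos(r\pi):r\in\Q\cap[0,1]\}$ is countable and \emph{dense} in $[-2,2]$, so knowing merely that each individual value has density zero, or that $a(p)$ lands in $[2-\varepsilon,2]$ with positive density, does not by itself produce a prime with $\theta_p/\pi$ irrational and $\sin\theta_p$ small. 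The paper closes this gap with Lemma~\ref{lemma1}: since $a(p)$ lies in the fixed number field $\mathbb{K}$ generated by the Fourier coefficients, whenever $\theta_p/2\pi=A_p/B_p$ is rational the primitive root $\mu_p=e^{2\pi i A_p/B_p}$ satisfies $\phi(B_p)=[\Q(\mu_p):\Q]\le 4[\mathbb{K}:\Q]$, so the rational $\theta_p$'s take only finitely many values and the irrational ones remain dense. With this lemma in place, the rest of your argument goes through verbatim. (Note also that you never use the divergence hypothesis \eqref{condition}; neither does the paper---its whole point is that equidistribution renders both hypotheses of Theorem~(AFZ) superfluous.)
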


Thanks to Sato-Tate, one can remove now the integrality and divergence hypothesis \eqref{condition} on the Fourier coefficients.
The condition that the $a(n)$ are integers is quite restrictive  in the case of the full modular group (when $N=1$), since it is expected that there are no 
Hecke newforms of weight $k>22$ with integer coefficients (as a consequence of  Maeda's Conjecture, see \cite{HM}). 
However, Alkan, Ford and Zaharescu presented an
example of an infinite set of newforms of weight $2$ arising from elliptic curves in their paper where the AFZ Theorem applies.
Also  the 
inequality \eqref{thineq} holds in fact for infinitely many integers of the form $n=p^m$, where $p\geq 5$ is a fixed prime.  Theorem \ref{th refinement} comprises these refinements.
\\

The statements of Theorems \ref{th refinement} and (AFZ) hold for all $x$, and in this sense they can be viewed as analogs of Dirichlet's theorem in the classical theory of Diophantine approximation. 
In \cite{AFZ} the question of developing a metric number theory is asked. For example, what would the rate of approximation be for almost all $x$? We study this problem in section 3.

One may also ask what is the best rate of approximation for all $x$ in (AFZ) and Theorem \ref{th refinement}.
In this direction, we show in Theorem \ref{thW1}, that for any prime $p$, there are $x$ for which 
 there exists a constant $\gamma_{x,f,p}$ such that
\begin{equation}\label{ineqw}
|a(p^m)-x|>\dfrac{\gamma_{x,f,p}}{m^2}\quad\forall m\in\N.
\end{equation}
Moreover, we show that, for every prime $p$, the set of $x\in (0,\frac{1}{2\sin\theta_p})$ satisfying \eqref{ineqw} 
has full Hausdorff dimension. 
In fact we prove that this set  is winning with respect to the  famous Schmidt Game.

Throughout we denote by $\|\cdot\|$ the distance to a nearest integer.

\section{Refinement of Theorem (AFZ)}
 
 Throughout this section we fix a newform $f$ of level $N$. 
We follow the ideas of \cite{AFZ} but we  use the equidistribution results on the angles $\theta_p$ to get rid of the assumptions in Theorem (AFZ).  
 
 It follows from Deligne's proof 
of the Ramanujan conjecture for newforms (c.f. \cite{D}) that, for any prime number $p$,
$$
a(p)=\mu_p+\overline{\mu_p}
$$
for some $\mu_p\in\C$ with $|\mu_p|=1$. Using the recursion \eqref{recursion}, it follows by induction on $m$ that 
\begin{equation}\label{regular}
a(p^m)=\dfrac{\mu_p^{m+1}-\overline{\mu_p}^{m+1}}{\mu_p-\overline{\mu_p}} =\dfrac{\sin((m+1)\theta_p)}{\sin\theta_p}.
\end{equation}
This relation plays a key role in \cite{AFZ} and here again. 

We need information about the distribution of the numbers $\theta_p/2\pi$ that are
  irrational. This is provided by the following lemma.

 \begin{lemma}\label{lemma1}
The angles $\theta_p/2\pi$ that are irrational are dense in $[0,\frac{1}{2}]$. 
 \end{lemma}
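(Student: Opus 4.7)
My plan is to combine Sato--Tate (or Deuring for CM) equidistribution of $\theta_p$ with a uniform algebraic-degree bound on $a(p)$. The former yields positive density of primes whose angle lies in any open interval, while the latter confines the possible rational values of $\theta_p/(2\pi)$ to a finite set inside any bounded interval; together these force a positive density of irrational angles in every interval.

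The algebraic step runs as follows. The coefficients of $f$ lie in a number field $K_f$ of some degree $d$, and since $k$ is even, $p^{(k-1)/2}\in\Q(\sqrt p)$, so $a(p)=a_f(p)/p^{(k-1)/2}\in K_f(\sqrt p)$ has degree $\le 2d$ over $\Q$. On the other hand, $2\cos(2\pi r/s)$ with $(r,s)=1$ and $s\ge 3$ generates the maximal totally real subfield of $\Q(\zeta_s)$, which has degree $\phi(s)/2$ over $\Q$. So if $\theta_p/(2\pi)=r/s$ for some prime $p$, then necessarily $\phi(s)\le 4d$, which leaves only finitely many denominators $s$. Consequently, in any open $J\subseteq[0,1/2]$ only finitely many rationals $q_1,\dots,q_M\in J$ can occur as $\theta_p/(2\pi)$ for a prime.

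For the counting, fix an open $J\subseteq[0,1/2]$ and set $I=2\pi J$; by shrinking $J$ slightly I may arrange that $\pi/2\notin\overline I$, so the limiting measure $\mu$ on $[0,\pi]$ (namely $\frac{2}{\pi}\sin^2\theta\,d\theta$ in the non-CM case, or $\frac{1}{2\pi}\,d\theta+\frac{1}{2}\delta_{\pi/2}$ in the CM case) has no atom on $\overline I$. Then equidistribution gives $\#\{p\le x:\theta_p\in I\}\sim\mu(I)\,\pi(x)$, while for each candidate $q_i$ the inclusion $\{p:\theta_p=2\pi q_i\}\subseteq\{p:\theta_p\in(2\pi q_i-\varepsilon,2\pi q_i+\varepsilon)\}$ combined with equidistribution, upon sending $\varepsilon\to 0$, forces $\limsup_x \#\{p\le x:\theta_p=2\pi q_i\}/\pi(x)\le\mu(\{2\pi q_i\})=0$. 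Subtracting the finite sum of these zero contributions, the density of primes with $\theta_p\in I$ and $\theta_p/(2\pi)$ irrational equals $\mu(I)>0$, so infinitely many such primes exist. Varying $J$ (and approaching $1/4$ from either side) yields density in $[0,1/2]$.

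The main obstacle I anticipate is the CM atom at $\theta=\pi/2$, which sits precisely at the rational angle $1/4$ and carries mass $1/2$; inside an interval straddling $\pi/2$ this atom could in principle contribute positive density to the rational-angle primes. The initial reduction to $\pi/2\notin\overline I$ circumvents this cleanly. The other essential input is the algebraic-degree bound: without it, equidistribution alone would be too weak, since a countable union of density-zero sets need not have density zero.
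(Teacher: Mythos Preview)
Your proof is correct and rests on the same algebraic ingredient as the paper's: a uniform bound $\phi(s)\le 4d$ on the denominator of any rational $\theta_p/2\pi$, obtained by comparing the degree of $2\cos\theta_p$ (equivalently of $\mu_p=e^{i\theta_p}$) over $\Q$ with that of the coefficient field. The paper, however, finishes more directly: once only finitely many angle \emph{values} can be rational, removing this finite set from the dense set $\{\theta_p/2\pi:p\text{ prime}\}\subset[0,\tfrac12]$ still leaves a dense set, so your prime-density counting and the special handling of the CM atom at $\pi/2$ are not needed.
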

 
 \begin{proof} 
  
Since the angles $\theta_p$ are dense in $[0,\pi]$, it is enough to show that the $\theta_p/2\pi$ that are rational constitute a finite set.
Let $p$ be a prime such that  $\theta_p/2\pi$ is rational. We write $\theta_p/2\pi=A_p/B_p$ where $B_p\geq 1$, $A_p< B_p$ and $(A_p,B_p)=1$. Since $\mu_p=e^{2\pi i(A_p/B_p)}$ is a primitive $B_p$-th root of unity, we have $[\Q(\mu_p):\Q]=\phi(B_p)$, where $\phi$ is the Euler function. On the other hand, $\mu_p$ is a zero of the polynomial
\begin{align*}
P(z)&=(z-\mu_p)(z-\overline{\mu_p})(z+\mu_p)(z+\overline{\mu_p})\\
&=(z^2-a(p)z+1)(z^2+a(p)z+1)=z^4+(2-a(p)^2)z^2+1.
\end{align*}
It is well-known that the field $\mathbb{K}$ generated by the coefficients of the newforms in $S^{new}_k(N)$ is a finite extension over $\Q$; let $d$ be the degree. Since $P(z)$ has coefficients in $\mathbb{K}$, it follows that $[\Q(\mu_p):\Q]\leq 4d$. Hence $\phi(B_p)\leq 4d$, and therefore $B_p$ belongs to a finite set of integers independent of $p$, and so does $\theta_p$. 
Hence the lemma follows.

\end{proof}






\begin{theorem}\label{th refinement}
For any real number $x$, there exist infinitely many primes $p$ and a constant $C_{f,x}$ depending only on $f$ and $x$ such that
$$
|a(p^m)-x|\leq \dfrac{C_{f,x}}{m}
$$
for infinitely many positive integers $m$.
\end{theorem}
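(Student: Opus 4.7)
The plan is to start from identity \eqref{regular}, which rewrites $a(p^m) = \sin((m+1)\theta_p)/\sin\theta_p$ and reduces the approximation of $x$ by $a(p^m)$ to the approximation of $x\sin\theta_p$ by the oscillatory sequence $\sin((m+1)\theta_p)$. This in turn reduces to understanding the orbit $(m+1)\theta_p \pmod{2\pi}$, which I will control via inhomogeneous Diophantine approximation.

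My first step would be to choose the primes. The case $x = 0$ follows from homogeneous Dirichlet applied to any prime with $\theta_p/(2\pi)$ irrational, so assume $x \neq 0$. I will fix a closed subinterval $J_x \subset (0,\pi)$ on which $\sin\theta$ takes values in $[\delta_x,\, 1/|x|]$ for some $\delta_x > 0$ depending only on $x$ (for instance, $\sin\theta \in [1/(2|x|),\, 1/|x|]$). By Lemma \ref{lemma1}, infinitely many primes $p$ satisfy $\theta_p \in J_x$ with $\theta_p/(2\pi)$ irrational, and for any such prime $|x\sin\theta_p| \leq 1$, so $y_p := \arcsin(x\sin\theta_p) \in [-\pi/2,\pi/2]$ is well-defined.

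The core step is then to apply the classical inhomogeneous Dirichlet theorem to the irrational $\alpha_p := \theta_p/(2\pi)$ with target $\beta_p := y_p/(2\pi) - \alpha_p$: this produces infinitely many positive integers $m$ with $\|m\alpha_p - \beta_p\| \leq C/m$ for an absolute constant $C$, equivalently $\|(m+1)\alpha_p - y_p/(2\pi)\| \leq C/m$. Using the $1$-Lipschitz property of $\sin$, this gives $|\sin((m+1)\theta_p) - \sin y_p| \leq 2\pi C/m$. Dividing through by $\sin\theta_p \geq \delta_x$ then yields $|a(p^m) - x| \leq 2\pi C/(\delta_x m)$, an estimate of the required form with constant $C_{f,x} := 2\pi C/\delta_x$ independent of the prime $p$ and integer $m$.

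The main obstacle will be ensuring that the constant in the inhomogeneous Dirichlet step is uniform over the infinite family of primes chosen. This is furnished by the classical Khintchine--Minkowski version of the theorem, whose constant is absolute. One minor technical point is the degenerate case $y_p/(2\pi) \in \Z\alpha_p + \Z$ (corresponding to $a(p^{m_0}) = x$ exactly for some $m_0$): here infinitely many additional $m$'s satisfying the bound come from homogeneous Dirichlet applied to $\alpha_p$ alone, applied around the index $m_0$.
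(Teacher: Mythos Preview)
Your proposal is correct and follows essentially the same route as the paper: choose primes $p$ with $\theta_p/2\pi$ irrational and $|\sin\theta_p|\le 1/|x|$ via Lemma~\ref{lemma1}, apply Minkowski's inhomogeneous theorem to $(m+1)\theta_p/2\pi$, and transfer to $a(p^m)$ through \eqref{regular} and the Lipschitz bound for $\sin$. Your restriction to a fixed interval $J_x$ with $\sin\theta_p\ge\delta_x$ is a genuine refinement, since it makes $C_{f,x}$ uniform over the infinitely many primes (the paper's $C_{f,x}=6\pi/\sin\theta_p$ varies with $p$); note only that your sample choice $\sin\theta\in[1/(2|x|),1/|x|]$ must be modified when $|x|<1/2$, and that the degenerate case you single out is already absorbed by Minkowski's theorem as stated.
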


In the proof we use the following well-known theorem due to Minkowski (see for example \cite{Hua}).

\begin{theorem*}[M]\label{th1}
For any irrational number $\theta$ and any real number $x$, 
$$
\left\|m\theta + x\right\| < \dfrac{3}{m}
$$
 for infinitely many positive integers $m$.
\end{theorem*}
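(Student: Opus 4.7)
The plan is to use the continued fraction convergents of $\theta$ to construct an approximately equispaced subset of the circle $\R/\Z$, then to locate $-x \pmod 1$ inside one of its arcs, thereby finding an integer $m$ with $m\theta$ close to $-x$ modulo $1$. Since $\theta$ is irrational, its continued fraction expansion furnishes an infinite sequence of convergents $p_n/q_n$ with $\gcd(p_n, q_n) = 1$, $q_n \to \infty$, and $|q_n \theta - p_n| < 1/q_n$. Fix one such pair $(p, q)$, write $q\theta = p + \eta$ with $|\eta| < 1/q$, and observe that for $0 \le j \le q - 1$,
$$
j\theta = \frac{jp}{q} + \frac{j\eta}{q},
$$
with $|j\eta/q| < 1/q$. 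Since $\gcd(p,q) = 1$, the residues $jp \pmod q$ permute $\{0, 1, \ldots, q-1\}$ as $j$ ranges over the same set, so $\{j\theta \pmod 1 : 0 \le j \le q - 1\}$ lies within Hausdorff distance $1/q$ of the equispaced set $\{k/q : 0 \le k \le q - 1\}$ in $\R/\Z$.

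Next, for the given $x$, the point $-x \pmod 1$ is within $1/(2q)$ of some $k_0/q$, and the unique $j_0 \in \{0, 1, \ldots, q-1\}$ with $j_0 p \equiv k_0 \pmod q$ then satisfies
$$
\|j_0 \theta + x\| \le \frac{1}{q} + \frac{1}{2q} = \frac{3}{2q}.
$$
Provided $j_0 \ge 1$, taking $m = j_0 \le q - 1$ yields $\|m\theta + x\| \le 3/(2q) < 3/m$. Running this along the sequence $q_n \to \infty$ produces integers $m_n \in \{1, \ldots, q_n - 1\}$ with $\|m_n \theta + x\| < 3/m_n$, as soon as $q_n$ is large enough that $1/(2q_n) < \|x\|$ (a condition that can fail for arbitrarily large $q_n$ only in the trivial case $x \in \Z$).

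The main obstacle is to ensure that the $m_n$ take infinitely many distinct values, since the circle-arithmetic construction a priori only guarantees one good $m$ per $q_n$. If the $m_n$ remained bounded, then some fixed $m^* \in \N$ would appear infinitely often and, along that subsequence, would satisfy $\|m^* \theta + x\| \le 3/(2q_{n_k}) \to 0$, forcing $\|m^* \theta + x\| = 0$, i.e.\ $x = r - m^* \theta$ for some $r \in \Z$. In this exceptional case (and in the degenerate case $x \in \Z$), I would fall back on Dirichlet's theorem applied directly to $\theta$: for infinitely many $m$ one has $\|m\theta\| < 1/m$, whence $\|(m + m^*) \theta + x\| = \|m\theta\| < 3/(m + m^*)$ once $m > m^*/2$, furnishing the required infinite family and completing the argument.
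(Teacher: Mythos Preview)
The paper does not actually prove Theorem~(M); it simply quotes it as a classical result of Minkowski and refers the reader to Hua's textbook. So there is no ``paper's own proof'' to compare against.

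Your argument is correct. It is the standard continued-fraction route: for a convergent $p/q$ of $\theta$ the points $j\theta\pmod 1$, $0\le j\le q-1$, form a perturbation of the regular grid $\{k/q\}$ with error below $1/q$, so some $j_0\le q-1$ lands within $3/(2q)$ of $-x$; since $j_0<q$ this gives $\|j_0\theta+x\|<3/j_0$. Your handling of the two edge cases is sound: if $j_0=0$ occurs for all large $q_n$ then $\|x\|=0$, and if the sequence $m_n$ stays bounded then some $m^*$ forces $m^*\theta+x\in\Z$, after which Dirichlet applied to $\theta$ alone supplies infinitely many $m$ with $\|(m+m^*)\theta+x\|=\|m\theta\|<1/m<3/(m+m^*)$ once $m>m^*/2$. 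One could streamline slightly by noting that the exceptional case $x\in -m^*\theta+\Z$ subsumes $x\in\Z$ (take $m^*=0$), but this is cosmetic.
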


\begin{proof}[Proof of Theorem \ref{th refinement}]

Let $x\in\R$. Let $p$ be a prime satisfying that $|\sin\theta_p|<1/|x|$ and that $\theta_p/2\pi$ is irrational. By Lemma \ref{lemma1} there exist infinitely many such primes. 
 Let
  $\delta$ be the angle $0\leq \delta<2\pi$ such that $\sin \delta=x \sin\theta_p$. 

By Theorem (M)  there exist infinitely many positive integers $m$ such that
$$
\left\|(m+1)\dfrac{\theta_p}{2\pi} - \dfrac{\delta}{2\pi}\right\| = 
\left|(m+1)\dfrac{\theta_p}{2\pi}-\left[(m+1)\dfrac{\theta_p}{2\pi}-\dfrac{\delta}{2\pi}\right]-\dfrac{\delta}{2\pi}\right|
< \dfrac{3}{m+1},
$$
where $[\cdot]$ is either the floor or the ceiling part.
For each $m$ as above, we have
$$
|\sin((m+1)\theta_p)-\sin\delta| < \dfrac{6\pi}{m+1}.
$$
Hence, by \eqref{regular},
$$
|a(p^m)-x|<\dfrac{C_{f,x}}{m+1}
$$
with $C_{f,x}=6\pi/\sin \theta_p$.

\end{proof}

\begin{rk}
In Theorem \ref{th refinement} one cannot improve the `for infinitely many primes $p$' statement to a `for all primes $p$' statement. Indeed, when $f$ is associated to an elliptic curve $E/\Q$, in which case $k=2$, by the modularity theorem, the primes $p\geq 5$ such that $p\nmid N$ and $a(p)=0$ are primes for which $E$ has supersingular reduction. Elkies \cite{El} proved that infinitely many such primes exist. 
For such a prime $p$, it follows from the recursion \eqref{recursion} that $a(p^m)=0$ for all odd $m$ and $a(p^m)=\pm a(p^{m+2})$ for all even $m$.
Hence the sequence $a(p^m)$ is not even dense.
\end{rk} 

\section{Metrical theory with respect to Lebesgue measure}

Throughout, we denote by $m$ the Lebesgue measure.
 Given a real number $x$ and its positive continued fraction $[a_0,a_1,a_2,\ldots]$ where $a_0\in\Z$ and $a_i\in\N$ for all $i\geq 1$, we refer to the denominators of the rational approximations $[a_0,a_1,\ldots,a_n]$ as the \textit{convergents} $q_n$ of $x$.

\begin{theorem}\label{thL}

Let $\varphi:\N\rightarrow(0,+\infty)$ be a  decreasing function.  
  The inequality
$$
|a(n)-x|<\varphi(n) 
$$
has infinitely many solutions $n>0$ for almost no $x$ if $\sum_{n=1}^\infty\varphi(n)<\infty$.

If the angles $\theta_p/2\pi$ that are irrational and satisfy
\begin{equation}\label{cond a}
\sum_{r=0}^\infty \sum_{n=q^{(p)}_r}^{q^{(p)}_{r+1}-1} \min\left(\varphi(n),\|q^{(p)}_n\theta_p\|\right)=\infty,
\end{equation}
where $q^{(p)}_{n}$ are the convergents of $\theta_p/2\pi$, are dense in $[0,\frac{1}{2}]$, then there exists a constant $C_{f,x}$  depending only on $f$ and $x$ such that
the inequality 
$$
|a(n)-x|<C_{f,x}\, \varphi(n)
$$
has infinitely many solutions $n>0$  for almost all $x$.


\end{theorem}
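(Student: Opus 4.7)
The first half of the theorem (convergence) is a direct Borel--Cantelli argument. For any fixed bounded interval $I\subset\R$, the set $E_n := \{x\in I : |a(n)-x|<\varphi(n)\}$ satisfies $m(E_n)\leq 2\varphi(n)$, and the convergence $\sum_n\varphi(n)<\infty$ combined with the Borel--Cantelli lemma implies that almost every $x\in I$ lies in only finitely many $E_n$. Covering $\R$ by a countable family of bounded intervals finishes this half.

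For the divergence half, the strategy is to upgrade the proof of Theorem \ref{th refinement} by replacing Minkowski's inhomogeneous theorem with a metric version. Fix a prime $p$ such that $\theta_p/(2\pi)$ is irrational and satisfies \eqref{cond a}; by the density hypothesis we may in addition arrange $|\sin\theta_p|<1/|x|$ for all $x$ in a fixed bounded target interval. For such $x$, pick a branch and define $\delta\in[0,2\pi)$ by $\sin\delta=x\sin\theta_p$; the map $x\mapsto\delta/(2\pi)$ is then a bi-Lipschitz diffeomorphism onto its image. Using \eqref{regular} one computes
$$
|a(p^m)-x|\=\frac{|\sin((m+1)\theta_p)-\sin\delta|}{|\sin\theta_p|},
$$
and the sum-to-product identity bounds this by an absolute constant times $\sin\theta_p^{-1}\bigl\|(m+1)\theta_p/(2\pi)-\delta/(2\pi)\bigr\|$. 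So it suffices to show that, for almost every target $\delta/(2\pi)$, the inhomogeneous approximation $\|(m+1)\theta_p/(2\pi)-\delta/(2\pi)\| < c_{f,x}\,\varphi(p^m)$ has infinitely many integer solutions $m$.

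The next step is to invoke an inhomogeneous metric Khintchine-type theorem (due to Kurzweil and Sz\"usz) for the irrational rotation by $\theta_p/(2\pi)$: this theorem asserts that, under a divergence condition of exactly the shape appearing in \eqref{cond a}, for almost every $y$ the inequality $\|n\,\theta_p/(2\pi)-y\|<\varphi(n)$ has infinitely many integer solutions $n$. Applying this to $y=\delta/(2\pi)$ and transporting the resulting full-measure set of $\delta$ back through the bi-Lipschitz change of variables produces a full-measure set of $x$ in our chosen interval on which the required conclusion $|a(p^m)-x|<C_{f,x}\,\varphi(p^m)$ holds for infinitely many $m$. Letting $p$ range over a dense family of admissible primes covers $\R$ by a countable union of such intervals, yielding the result for almost every $x\in\R$.

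The main obstacle is the careful statement and application of the inhomogeneous Khintchine theorem: one must match its hypothesis precisely to \eqref{cond a}, and reconcile the fact that the Khintchine theorem produces good approximating integers $m+1$ while the original approximation uses the modular index $n=p^m$ (the rate $\varphi$ must be read off at $p^m$ rather than at $m+1$). A secondary technicality is bookkeeping the finitely many branches of $\delta=\delta(x)$ and ensuring that the exceptional null set patches correctly as $p$ varies; these contribute only countable unions of null sets and so do not affect the almost-everywhere conclusion.
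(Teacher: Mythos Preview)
Your approach is essentially the paper's: Borel--Cantelli for the convergence half, and for divergence the same reduction via \eqref{regular} from $|a(p^m)-x|$ to the inhomogeneous rotation problem $\|(m+1)\theta_p/(2\pi)-\delta/(2\pi)\|<\varphi(m)$, followed by a metric Khintchine-type theorem. Two small points of divergence. First, the metric input whose hypothesis is \emph{exactly} condition \eqref{cond a} is the theorem of Fuchs and Kim, not Kurzweil--Sz\"usz; Kurzweil's criterion applies only when $\theta_p/(2\pi)$ is badly approximable, and Sz\"usz's theorem is doubly metric in $(\theta,y)$, so neither matches \eqref{cond a} for a fixed irrational $\theta_p$. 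Second, the paper runs the change of variables in the opposite direction---it parametrizes by $\delta$, obtains a full-measure set of $\delta$ from Fuchs--Kim, and then pushes forward to $x=\sin(2\pi\delta)/\sin\theta_p$---which sidesteps your branch bookkeeping for $\arcsin$. As for the $\varphi(m)$-versus-$\varphi(p^m)$ obstacle you flag: the paper's proof in fact only establishes $|a(p^m)-x|<C_{f,x}\,\varphi(m)$, so your observation that the rate is read at $m$ rather than at $n=p^m$ is a genuine discrepancy already present in the paper.
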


The proof of the theorem strongly relies on the following recent result in metrical  inhomogeneous Diophantine approximation due to Fuchs and Kim \cite{FK}:

\begin{theorem*}[FK]
Let $\varphi:\N\rightarrow(0,+\infty)$ be a  decreasing function and $\theta$ be
an irrational number with  convergents $s_r/q_r$. Then, for almost all
$x\in\R$,
\begin{equation}\label{yep}
\|m\theta - x\|<\varphi(m)\quad \mbox{for infinitely many $m\in\N$}
\end{equation}
if and only if
$$
\sum_{r=0}^\infty \sum_{m=q_r}^{q_{r+1}-1} \min\left(\varphi(m),\|q_m\theta\|\right)=\infty.
$$
\end{theorem*}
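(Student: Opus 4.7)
The theorem has two independent halves. The convergence half is handled by a direct Borel--Cantelli argument: on any bounded interval $I=[-A,A]$, the set $E_n=\{x\in I:|a(n)-x|<\varphi(n)\}$ is contained in an interval of length at most $2\varphi(n)$, so $\sum_n m(E_n)\le 2\sum_n\varphi(n)<\infty$ and the first Borel--Cantelli lemma gives $m(\limsup_n E_n)=0$. Letting $A\to\infty$ yields the stated conclusion for almost no $x\in\R$.

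For the divergence half, the plan is to transplant the proof of Theorem~\ref{th refinement}, using Fuchs--Kim where Theorem~(M) was used before. Fix a bounded interval $[-A,A]$ for $x$. By the density hypothesis combined with Lemma~\ref{lemma1}, choose a prime $p$ such that $\theta_p/2\pi$ is irrational, $|\sin\theta_p|<1/A$, and \eqref{cond a} holds. For each $x\in[-A,A]$, define $\delta=\delta(x)\in(-\pi/2,\pi/2)$ by $\sin\delta=x\sin\theta_p$. The map $x\mapsto\delta(x)/2\pi$ is then a bi-Lipschitz diffeomorphism of $[-A,A]$ onto a subinterval of $\R/\Z$, with constants depending only on $\theta_p$.

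The hypothesis \eqref{cond a} is exactly the divergence condition of Fuchs--Kim applied to $\theta=\theta_p/2\pi$ and the decreasing function $\varphi$. Fuchs--Kim therefore supplies a full-measure set of $y\in\R$ for which $\|m(\theta_p/2\pi)-y\|<\varphi(m)$ admits infinitely many solutions $m\in\N$. The bi-Lipschitz change of variable transfers this ``almost every $y$'' to ``almost every $x\in[-A,A]$'' with $y=\delta(x)/2\pi$. For each such $m$, a first-order $\sin$ estimate gives $|\sin(m\theta_p)-\sin\delta|<2\pi\varphi(m)$; dividing by $\sin\theta_p$ and invoking identity~\eqref{regular} produces $|a(p^{m-1})-x|<C_{f,x}\varphi(m)$ with $C_{f,x}=2\pi/\sin\theta_p$. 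Setting $n=p^{m-1}$ exhibits infinitely many positive integers $n$ meeting the claim, and $A\to\infty$ finishes the argument.

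\textbf{Main obstacle.} The metric theory is entirely packaged into the Fuchs--Kim black box, so the substantive work on our side is twofold. First, for each $x$ one must \emph{select} a prime $p$ satisfying irrationality of $\theta_p/2\pi$, smallness of $|\sin\theta_p|$, and condition \eqref{cond a}; this is the only place where the density assumption of the theorem is used, and it is the logical heart of the deduction. Second, one must \emph{transfer the exceptional null set} from Fuchs--Kim's $y$-variable to our $x$-variable, which is precisely what the uniform bi-Lipschitz bounds on $x\mapsto\delta(x)/2\pi$ accomplish. A delicate bookkeeping point is the mismatch between the index $m$ appearing on the right-hand side of the Fuchs--Kim bound and the index $n=p^{m-1}$ appearing in the conclusion; this has to be absorbed into $C_{f,x}$ (or traded against a mild regularity assumption on $\varphi$), and is the one step where the argument is not entirely mechanical.
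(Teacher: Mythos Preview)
You have misidentified the target. The displayed statement is Theorem~(FK), the Fuchs--Kim 0--1 law for inhomogeneous approximation $\|m\theta-x\|<\varphi(m)$. The paper does \emph{not} prove this result; it is quoted from \cite{FK} and used as a black box. Your proposal, by contrast, is a proof sketch for Theorem~\ref{thL} (the metrical result on $|a(n)-x|<\varphi(n)$), and you even invoke ``Fuchs--Kim'' as an external ingredient inside it. So as a proof of Theorem~(FK) your proposal is vacuous: it assumes the very statement it is supposed to establish.

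If the intended target was actually Theorem~\ref{thL}, then your argument is essentially identical to the paper's. Both do Borel--Cantelli for the convergence half, and for the divergence half both fix a prime $p$ with $\theta_p/2\pi$ irrational and satisfying \eqref{cond a}, apply (FK) to get a full-measure set in the $\delta$-variable, pass through the Lipschitz map $\delta\mapsto\sin(2\pi\delta)/\sin\theta_p$, and use \eqref{regular} with the elementary bound $|\sin u-\sin v|\le|u-v|$. Your write-up is slightly more explicit than the paper's about the bi-Lipschitz transfer of null sets (the paper simply asserts ``the set of such $x$ has maximal Lebesgue measure''), and you flag the harmless $m\leftrightarrow m{+}1$ index shift; but there is no genuine difference in strategy.
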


\begin{proof}[Proof of Theorem \ref{thL}]
The first part of the theorem is a consequence of the Borel-Cantelli Lemma. Indeed, let $\W$ denote the set of all real numbers $x$ such that
$$
|a(n)-x|<\varphi(n)
$$
for infinitely many $n\in\N$.
Define
$$
E_n=\left\{x\in\R:\, |a(n)-x|<\varphi(n)\right\}.
$$
We can rewrite $\W$ as
$$
\W=\bigcap_{M=1}^\infty \bigcup_{n=M}^\infty E_n.
$$
Hence, for every integer $M>1$,
$$
\W\subseteq \bigcup_{n=M}^\infty E_n,
$$
and then
$$
m(\W)\leq \sum_{n=M}^\infty m(E_n)=2\sum_{n=M}^\infty\varphi(n)\rightarrow 0\quad \mbox{as $M\rightarrow\infty$}
$$
since $\sum_{n=1}^\infty\varphi(n)<\infty$.
\\

The second part is obtained as an application of Theorem (FK). We first prove that the inequality
$$
|a(p^m)-x| < C_{f,x} \varphi(m)
$$
has infinitely many solutions $m>0$ for almost all $x$ in the interval $(-\frac{1}{\sin \theta_p},\frac{1}{\sin \theta_p})$ for any fixed prime $p$ such that $\theta_p/2\pi$ is irrational and satisfies \eqref{cond a}. Since we assume that the set of such primes is dense, this will prove the second part of the theorem.
Let $p$ be a prime  such that $\theta_p/2\pi$ is irrational. We suppose that $\theta_p/2\pi$ satisfies the condition \eqref{cond a}.
By Theorem (FK), for almost all $\delta\in\R$,
\begin{equation}\label{aux}
\left\|(m+1)\dfrac{\theta_p}{2\pi} - \delta\right\|< \varphi(m)\quad\mbox{for infinitely many $m\in\N$}.
\end{equation}
 Let $\delta\in\R$ satisfy \eqref{aux}. 
For each $m$ as above, we have
$$
|\sin((m+1)\theta_p)-\sin 2\pi\delta| < 2\pi \varphi(m).
$$
Hence for $x=\sin \delta/\sin\theta_p$ we have 
$$
|a(p^m)-x|<C_{f,x}\, \varphi(m)
$$
with $C_{f,x}=2\pi/\sin \theta_p$. The set of such $x$ has maximal Lebesgue measure in the interval $[-1/\sin\theta_p,1/\sin\theta_p]$.

\end{proof}

\begin{rk}
 The result of Fuchs and Kim contains several previous results as special cases: the theorem of Kurzweil \cite{Ku} as well as its extensions given in the same paper,
 and the theorem of Tseng \cite{T2}. Hence one could change the condition \eqref{cond a} by the respective conditions of Kurzweil and Tseng in Theorem \eqref{thL}. In particular, if $\theta_p$ is badly approximable, then by definition there exists a constant $c>0$ such that 
 $\|n\theta_p\|>c/n$ for all $n\geq 1$. Thus, for $q^{(p)}_r\leq n\leq q^{(p)}_{r+1}$, 
 $$
 \|q^{(p)}_r\theta_p\|>\dfrac{c}{n},
 $$ 
 so the condition \eqref{cond a} is satisfied provided that $\sum_{n\geq 1} \min(\varphi(n),c/n)=\infty$, hence provided that  $\sum_{n=1}^\infty \varphi(n)=\infty$. (This is Kurzweil's condition). 
\end{rk} 
 
\begin{coro}\label{coroL} Let $\varphi:\N\rightarrow(0,+\infty)$ be a  decreasing function. If the  angles $\theta_p/2\pi$ that  are badly approximable are dense in $[0,\frac{1}{2}]$, then 
there exists a constant $C_{f,x}$ depending only on $f$ and $x$ such that  the inequality 
$$
|a(n)-x|<C_{f,x}\, \varphi(n)
$$
has infinitely many solutions $n>0$  for almost all $x$ if $\sum_{n=1}^\infty\varphi(n)=\infty$.
\end{coro}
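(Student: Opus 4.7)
The plan is to deduce the corollary directly from Theorem \ref{thL}: under the hypothesis $\sum_n \varphi(n) = \infty$, every badly approximable angle $\theta_p/2\pi$ automatically satisfies the divergence condition \eqref{cond a}, so density of badly approximable angles forces density of irrational angles satisfying \eqref{cond a}, and Theorem \ref{thL} applies verbatim.

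Concretely, I would fix a prime $p$ for which $\theta_p/2\pi$ is badly approximable and verify \eqref{cond a} for it. By definition there exists $c = c(p) > 0$ with $\|n(\theta_p/2\pi)\| > c/n$ for every $n \geq 1$. In particular, for $q_r^{(p)} \leq n \leq q_{r+1}^{(p)} - 1$ one has $\|q_r^{(p)} \theta_p/2\pi\| > c/q_{r+1}^{(p)} \geq c/n$, so the inner minimum in \eqref{cond a} is bounded below by $\min(\varphi(n), c/n)$, and the whole double sum is bounded below by $\sum_{n \geq 1} \min(\varphi(n), c/n)$.

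It then remains to note the classical Kurzweil reduction: if $\varphi$ is decreasing and $\sum_n \varphi(n) = \infty$, then $\sum_n \min(\varphi(n), c/n) = \infty$ as well. This is exactly what is recorded in the remark preceding the corollary, and is handled by splitting the indices according to whether $\varphi(n) \leq c/n$ or $\varphi(n) > c/n$ and treating the two sub-sums separately. Once this routine step is in hand, every badly approximable $\theta_p/2\pi$ satisfies \eqref{cond a}; since badly approximable numbers are automatically irrational, the density hypothesis in the second part of Theorem \ref{thL} follows directly from the corollary's hypothesis. That theorem then supplies the constant $C_{f,x}$ and the almost-everywhere statement. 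There is no substantive obstacle; the corollary is genuinely immediate once the Kurzweil reduction is acknowledged.
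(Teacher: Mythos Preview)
Your approach is exactly the paper's: the corollary is read off from Theorem~\ref{thL} together with the preceding remark, which already records that a badly approximable $\theta_p/2\pi$ forces \eqref{cond a} via the Kurzweil reduction $\sum_n\min(\varphi(n),c/n)=\infty$.

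One small slip to fix: in your chain $\|q_r^{(p)}\theta_p/2\pi\|>c/q_{r+1}^{(p)}\geq c/n$ the last inequality is backwards, since $n\leq q_{r+1}^{(p)}-1$ gives $c/n>c/q_{r+1}^{(p)}$. The intended (and correct) estimate is the one in the paper's remark: apply the badly approximable bound at $q_r^{(p)}$ itself to get $\|q_r^{(p)}\theta_p/2\pi\|>c/q_r^{(p)}\geq c/n$ for $n\geq q_r^{(p)}$.
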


\section{On the optimality of Theorem \ref{th refinement}}

Let $X\subset\mathbb{R}$. For $s \geq 0$ and $\rho > 0$ define
$$
\H^s_\rho(X)=\inf\left\{\sum_i d_i^s:\left\{B_i\right\} \mbox{is a $\rho$-cover of $X$}\right\},
$$
where a $\rho$-cover of $X$ is any countable collection $B_i$ of balls of diameter $d_i<\rho$ such that
$$
X\subset \bigcup_i B_i,
$$
and the infimium is taken over all possible $\rho$-covers of $X$. The $s$-dimensional Hausdorff measure of $X$ is defined as the following (finite or
infinite) limit
$$
\H^s(X)=\lim_{\rho\rightarrow 0^+}\H^s_\rho(X),
$$
and the Hausdorff dimension of $X$ is 
$$
\dim_H (X) = \inf\left\{s \geq 0 : \H^s (X) = 0\right\}.
$$

\subsection{Schmidt Game}

We present a simplified version of the game that Wolfgang M. Schmidt introduced in \cite{Schmidt}. Let $I\subseteq\R$ be an interval. The game involve two players A and B and two real numbers $\alpha,\beta\in (0,1)$. The game starts with player B choosing at will a closed interval $B_0\subseteq I$. Next, player A chooses a closed interval $A_0\subset B_0$ of length $\alpha|B_0|$. Then, player B chooses  a closed interval $B_1\subset A_0$ of length $\alpha\beta|B_0|$. The two players keep playing alternately in this way, generating a nested sequence of closed intervals in $I$:
$$
B_0\supset A_0\supset B_1\supset A_1\supset\ldots\supset B_s\supset A_s\supset\ldots
$$
with lengths
$$
|A_s|=\alpha|B_s|\qquad\mbox{and}\qquad|B_s|=\beta |A_{s-1}|=(\alpha\beta)^s|B_0|.
$$
A subset $X\subseteq I$ is called $\alpha$-winning in $I$ if player A can play so that the unique point of intersection
$$
\bigcap_{s=0}^\infty B_s=\bigcap_{s=0}^\infty A_s
$$
lies in $X$ whatever the value of $\beta$ is. A set is simply called winning in $I$ if it is $\alpha$-winning in $I$ for some $\alpha\in(0,1)$. When $I=\R$, we simply say that $X$ is winning or $\alpha$-winning.
Schmidt proved the following two theorems about his game:

\begin{theorem*}[S1]
If $X\subseteq I$ is  winning in $I$, then $\dim_H(X)=\dim_H(I)$.
\end{theorem*}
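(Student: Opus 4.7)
Since $I \subseteq \R$ is an interval (necessarily with nonempty interior, or the game is trivial), we have $\dim_H(I) = 1$, so the goal is to show $\dim_H(X) \geq 1$. The plan is to use player A's winning strategy as a tool to embed a self-similar Cantor-type set of prescribed dimension inside $X$. Suppose $X$ is $\alpha$-winning in $I$ and fix $\alpha$; the free parameter $\beta \in (0,1)$, which is chosen by player B at each stage, will be taken arbitrarily small to drive the dimension up to $1$.

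First I would run the game once to produce $B_0 \subseteq I$ and A's winning reply $A_0$ of length $\alpha|B_0|$. Inside $A_0$ I would pack a disjoint family of $N := \lfloor 1/(2\beta) \rfloor$ closed subintervals $B_1^{(1)}, \ldots, B_1^{(N)}$, each of length $\alpha\beta|B_0|$, separated by gaps of comparable order. Each such $B_1^{(i)}$ is a legal move for B, so the winning strategy produces an interval $A_1^{(i)} \subset B_1^{(i)}$ of length $\alpha^2 \beta |B_0|$. Iterating, I get a tree: at level $s$ there are $N^s$ pairwise disjoint intervals $A_{i_1,\ldots,i_s}$ of length $\alpha(\alpha\beta)^s |B_0|$, and every infinite branch corresponds to a legitimate game play of B against the fixed winning strategy, so the intersection point lies in $X$. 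Hence the limit Cantor set $\mathcal{C}_\beta$ is contained in $X$.

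To get the dimension lower bound I would apply the mass distribution principle: assign mass $N^{-s}$ to each level-$s$ interval. Because consecutive level-$s$ intervals are separated by a gap of order $(\alpha\beta)^s |B_0|$, any ball of radius $r$ with $(\alpha\beta)^{s+1}|B_0| \leq r \leq (\alpha\beta)^s|B_0|$ meets only $O(1)$ level-$s$ pieces, so its $\mu$-mass is $O(N^{-s}) = O(r^{\log N / \log(1/(\alpha\beta))})$. Thus
$$
\dim_H(X) \;\geq\; \dim_H(\mathcal{C}_\beta) \;\geq\; \frac{\log N}{\log(1/(\alpha\beta))} \;=\; \frac{\log\lfloor 1/(2\beta)\rfloor}{\log(1/(\alpha\beta))}.
$$
Letting $\beta \to 0^+$, both $\log N$ and $\log(1/(\alpha\beta))$ behave like $\log(1/\beta)$, so the ratio tends to $1$. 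This yields $\dim_H(X) \geq 1 = \dim_H(I)$, and the reverse inequality is immediate from $X \subseteq I$.

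The main obstacle is bookkeeping the packing at every scale: one must confirm that $N$ disjoint intervals of length $\alpha\beta|B_0|$ with nontrivial gaps really do fit inside the parent interval of length $\alpha|B_0|$, and that this packing can be reproduced inside each $A_{i_1,\ldots,i_s}$ with a uniform gap bound that feeds cleanly into the mass distribution estimate. Once the geometry is pinned down, the dimension calculation is the classical one for regular Cantor sets and the rest is automatic.
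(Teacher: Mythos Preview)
The paper does not actually prove Theorem~(S1); it is quoted without proof as a result of Schmidt, with a reference to \cite{Schmidt}. So there is no ``paper's own proof'' to compare against.

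That said, your argument is correct and is in fact the classical one going back to Schmidt's original paper: exploit player B's freedom to branch into $N \asymp 1/\beta$ disjoint legal moves at each stage, so that A's fixed $\alpha$-winning strategy is forced to produce a full $N$-ary Cantor tree inside $X$, and then let $\beta \to 0$ to push $\log N / \log(1/(\alpha\beta)) \to 1$. The packing check you flag as the ``main obstacle'' is genuinely routine: since $|A_s| = \alpha(\alpha\beta)^s|B_0|$ and the children are of length $(\alpha\beta)^{s+1}|B_0|$, the ratio is always $1/\beta$, so the same $N = \lfloor 1/(2\beta)\rfloor$ works uniformly at every level with gaps of order $(\alpha\beta)^{s+1}|B_0|$, exactly what the mass distribution estimate needs.
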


\begin{theorem*}[S2]
The intersection of countably many $\alpha$-winning sets in $I$ is $\alpha$-winning in $I$.
\end{theorem*}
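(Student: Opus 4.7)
The plan is to amalgamate the $\alpha$-winning strategies for each $X_k$ into a single $\alpha$-winning strategy for $X := \bigcap_k X_k$. Fix $\beta \in (0,1)$, the ratio used by player B throughout the combined game. For each $k$, I will let $\sigma_k$ denote a winning strategy for player A in the $(\alpha,\tilde\beta_k)$-game for $X_k$ in $I$, where $\tilde\beta_k \in (0,1)$ is a specific constant chosen below; such a $\sigma_k$ exists because $X_k$ is $\alpha$-winning in $I$ and the definition allows an arbitrary B-ratio. The overall idea is to partition the sequence of player A's moves into countably many disjoint infinite subsequences, one per $k$, and have player A execute $\sigma_k$ on the $k$-th subsequence while ignoring (for the purposes of $\sigma_k$) what happens on the others.

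I would take the dyadic partition $\N = \bigsqcup_{k\geq 1} N_k$ with $N_k = \{(2m+1)2^{k-1} - 1 : m\geq 0\}$. Each $N_k$ is infinite, and the key feature is that consecutive elements of $N_k$ differ by exactly $g_k := 2^k$. Tracking how the interval sizes evolve between two consecutive $\sigma_k$-moves (they shrink by alternating factors of $\beta$ and $\alpha$), a direct computation yields
\[
\tilde\beta_k \;=\; \beta(\alpha\beta)^{g_k - 1} \;\in\; (0,1).
\]
From $\sigma_k$'s viewpoint the subsequence $(B_{s_j}, A_{s_j})_{j\geq 1}$, where $s_j$ enumerates $N_k$ in increasing order, is then a legitimate $(\alpha,\tilde\beta_k)$-Schmidt game in $I$: the relations $|A_{s_j}| = \alpha|B_{s_j}|$ and $|B_{s_{j+1}}| = \tilde\beta_k|A_{s_j}|$ both hold by construction, and $B_{s_1} \subseteq I$ follows from $B_0 \subseteq I$. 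Hence the limit point $\bigcap_j A_{s_j}$ lies in $X_k$.

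To conclude, since the intervals in the full game shrink to zero, the unique point of $\bigcap_s A_s$ coincides with $\bigcap_j A_{s_j}$ for every $k$, so this common limit lies in every $X_k$, i.e., in $X$. The interleaved strategy is therefore $\alpha$-winning for $X$ in $I$, and as $\beta$ was arbitrary, $X$ is $\alpha$-winning in $I$. The main obstacle I anticipate is that the winning strategy $\sigma_k$ for $X_k$ may depend on B's ratio, so one must arrange for $\sigma_k$'s effective B-ratio to remain a fixed constant across all of its turns; this is precisely the role of the constant-gap property of the dyadic partition above.
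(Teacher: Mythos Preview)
The paper does not supply a proof of Theorem~(S2); it merely records the statement and attributes it to Schmidt~\cite{Schmidt}. So there is no in-paper argument to compare against.

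Your proof is correct and is essentially Schmidt's original interleaving argument. The key points---partitioning the move indices into infinite arithmetic-like progressions $N_k$ with constant gap $g_k=2^k$, computing the induced B-ratio $\tilde\beta_k=\beta(\alpha\beta)^{g_k-1}$ for the sub-game seen by $\sigma_k$, and noting that the nested intersection is the same along every subsequence---are all handled accurately. The only cosmetic remark is that you might state explicitly what player~A does at a given index $s$: namely, $s$ lies in exactly one $N_k$ (since your sets partition $\{0,1,2,\ldots\}$ via the $2$-adic valuation of $s+1$), and A applies $\sigma_k$ there, treating $B_s$ as B's current move in the $(\alpha,\tilde\beta_k)$-game. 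You have this implicitly, but making it explicit removes any doubt that A's combined strategy is well defined at every turn.
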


Schmidt also proved that the winning property is invariant under local isometries; this has been refined by Dani (see \cite{Dani} Proposition 5.3) with the following result:

\begin{theorem*}[D]
The image of a winning set in $I$  under a bi-Lipschitz map $\varphi$ is winning in $\varphi(I)$. 
\end{theorem*}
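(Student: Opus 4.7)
Let $X\subseteq I$ be $\alpha$-winning in $I$ and let $\varphi:I\to\varphi(I)$ be bi-Lipschitz with constant $L\geq 1$, so $L^{-1}|x-y|\leq|\varphi(x)-\varphi(y)|\leq L|x-y|$. Being continuous and injective on an interval, $\varphi$ is strictly monotone and sends closed sub-intervals of $I$ to closed sub-intervals of $\varphi(I)$ with lengths distorted by a factor in $[1/L,L]$. The plan is to show $\varphi(X)$ is $\alpha^*$-winning in $\varphi(I)$ for $\alpha^*:=\alpha/L^2$, by having player A simulate A's winning $I$-strategy inside the ambient $\varphi(I)$-game through $\varphi$.

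Given parameters $\alpha^*,\beta^*$ for the $\varphi(I)$-game, player A maintains an auxiliary Schmidt $(\alpha,\beta)$-game on $I$ with $\beta:=\alpha^*\beta^*/\alpha\in(0,1)$, chosen so that $\alpha\beta=\alpha^*\beta^*$ and the two games have the same geometric decay. At round $s$, when B plays $\tilde B_s\subseteq\varphi(I)$, A pulls back to $\varphi^{-1}(\tilde B_s)\subseteq I$ and inscribes inside it a sub-interval $B_s$ of length $(\alpha\beta)^s|B_0|$, where $|B_0|:=|\tilde B_0|/L$; this inscription is feasible by the distortion bound $|\varphi^{-1}(\tilde B_s)|\geq|\tilde B_s|/L$. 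A then applies A's winning $I$-strategy to obtain $A_s\subseteq B_s$ of length $\alpha|B_s|$, and plays $\tilde A_s$ as a sub-interval of $\varphi(A_s)\subseteq\tilde B_s$ of length $\alpha^*|\tilde B_s|$. That such a sub-interval exists reduces, via $|\varphi(A_s)|\geq\alpha|B_s|/L$, to the condition $\alpha^*\leq\alpha/L^2$, which is ensured by our choice.

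Because $\tilde B_{s+1}\subseteq\tilde A_s\subseteq\varphi(A_s)$, the pullback $\varphi^{-1}(\tilde B_{s+1})$ sits inside $A_s$, so the inscribed intervals in successive rounds nest correctly and $(B_s,A_s)$ is a legal play of the $I$-game. A's winning strategy for $X$ then forces $\bigcap_s B_s=\{x^*\}$ with $x^*\in X$. By continuity and injectivity of $\varphi$, $\bigcap_s\tilde B_s=\bigcap_s\tilde A_s\subseteq\bigcap_s\varphi(A_s)=\varphi(\bigcap_s A_s)=\{\varphi(x^*)\}\subseteq\varphi(X)$, and this nested intersection is non-empty by compactness, so it equals $\{\varphi(x^*)\}\subseteq\varphi(X)$. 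This shows $\varphi(X)$ is $\alpha^*$-winning.

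The main obstacle is the length bookkeeping above: to simultaneously fit $B_s$ inside $\varphi^{-1}(\tilde B_s)$ and $\tilde A_s$ inside $\varphi(A_s)$ at every round, the geometric decay rates of the two games must agree exactly. The device is to let $\beta$ depend on $\beta^*$ so that $\alpha\beta=\alpha^*\beta^*$; this is permissible precisely because $X$ is $\alpha$-winning against every $\beta\in(0,1)$, so A is free to tune $\beta$ round by round if needed. The only loss is the factor $L^{-2}$ in passing from $\alpha$ to $\alpha^*$, coming from two applications of the distortion bound on $\varphi$ (one for the outer interval, one for the inner).
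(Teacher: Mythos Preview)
The paper does not prove Theorem~(D); it merely states the result and attributes it to Dani (\cite{Dani}, Proposition~5.3). So there is no proof in the paper to compare against.

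Your argument is essentially correct and follows the standard strategy-transfer approach one finds in Dani's work and its descendants: pull back B's moves through $\varphi^{-1}$, run A's winning strategy on $I$, and push the response forward through $\varphi$, with the bi-Lipschitz constant absorbed into the game parameter. The length bookkeeping you carry out is accurate: with $\alpha^*=\alpha/L^2$ and $\beta=\beta^*/L^2$ one has $\alpha\beta=\alpha^*\beta^*$, and both inscription steps go through with equality in the worst case, exactly as you compute.

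One remark: your final paragraph says A ``is free to tune $\beta$ round by round if needed''. This is both unnecessary and not quite in the spirit of Schmidt's game, where $\beta$ is fixed at the outset. In your construction $\beta$ is determined once $\beta^*$ is given and never changes thereafter, so you should simply drop that clause. Otherwise the proof is sound.
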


\subsection{The set $\Bad(p,\varphi)$}
Let $p$ be a prime and $\varphi:\N\rightarrow(0,+\infty)$ be a  decreasing function.
In view of Theorems \ref{th refinement} and \ref{thL}, we define the set of $(p,\varphi)$ badly approximable numbers by
$$
\Bad(p,\varphi)=\left\{x\in \R:\, \inf_{m\in\N} \varphi(m)\cdot|a(p^m)-x|>0\right\}.
$$

\begin{theorem}\label{thW1} 
For any prime $p$ the set $\Bad(p,m^2)$
is winning in $(0,\frac{1}{2\sin \theta_p})$.
\end{theorem}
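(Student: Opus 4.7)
The plan is to transport the condition defining $\Bad(p, m^2)$ to a statement about inhomogeneous Diophantine approximation via the identity $a(p^m) = \sin((m+1)\theta_p)/\sin\theta_p$ from \eqref{regular}, and then combine known Schmidt-game results for badly approximable sets. The case $\alpha := \theta_p/(2\pi) \in \Q$ is trivial: then $\{a(p^m)\}$ takes only finitely many distinct values, so $\Bad(p, m^2)$ is cofinite in $\R$ and hence winning in any interval. Assuming $\alpha$ irrational, I introduce $\beta = \beta(x) \in (0, \tfrac{1}{12})$ by $\sin(2\pi\beta) = x\sin\theta_p$. Since $x \mapsto \beta(x)$ is a bi-Lipschitz homeomorphism of $(0, \tfrac{1}{2\sin\theta_p})$ onto $(0, \tfrac{1}{12})$, Theorem (D) allows me to work entirely in $\beta$-coordinates.

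Expanding $\sin A - \sin B = 2\sin\tfrac{A-B}{2}\cos\tfrac{A+B}{2}$ on the formula in \eqref{regular}, the condition ``$|a(p^m) - x| > c/m^2$ for all $m\geq 1$'' should reduce, up to multiplicative constants depending only on $\theta_p$, to
$$
\min\bigl( \|(m+1)\alpha - \beta\|,\; \|(m+1)\alpha + \beta - \tfrac12\| \bigr) > c'/m^2 \qquad (\forall\, m \geq 1).
$$
Writing $n = m+1$ and setting $\mathbf{B}(\alpha) := \{\gamma \in \R : \inf_{n \geq 1} n^2\|n\alpha - \gamma\| > 0\}$, this is precisely the condition $\beta \in \mathbf{B}(\alpha) \cap (\tfrac12 - \mathbf{B}(\alpha))$.

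For the winning part I would invoke Tseng's theorem \cite{T2}: the inhomogeneous badly approximable set $\mathbf{Bad}(\alpha) := \{\gamma : \inf_n n\|n\alpha - \gamma\| > 0\}$ is $\alpha_0$-winning in $\R$ for some $\alpha_0 > 0$. Since $n\|n\alpha - \gamma\| > c$ forces $n^2\|n\alpha - \gamma\| > c$ for $n \geq 1$, we have $\mathbf{Bad}(\alpha) \subset \mathbf{B}(\alpha)$, and a superset of an $\alpha_0$-winning set is $\alpha_0$-winning, so $\mathbf{B}(\alpha)$ is winning. The reflection $\gamma \mapsto \tfrac12 - \gamma$ is an isometry, so by Theorem (D) the set $\tfrac12 - \mathbf{B}(\alpha)$ is winning with the same $\alpha_0$. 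Theorem (S2) then gives that the intersection $\mathbf{B}(\alpha) \cap (\tfrac12 - \mathbf{B}(\alpha))$ is winning, and applying Theorem (D) once more through the bi-Lipschitz map $\beta \mapsto x$ transfers the property back to $(0, \tfrac{1}{2\sin\theta_p})$.

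The main technical point I expect to need care is the equivalence in the second paragraph: the factor $\cos\pi((m+1)\alpha + \beta)$ degenerates when $(m+1)\alpha \bmod 1$ approaches $\tfrac14$ or $\tfrac34$, but in those regimes $|\sin(2\pi(m+1)\alpha)|$ is close to $1$ while $|\sin(2\pi\beta)| < \tfrac12$, so $|a(p^m) - x|$ is automatically bounded below by an absolute constant and the required inequality is trivial there. This should make the reduction uniform in both $m$ and $\beta$, with all constants depending only on $\theta_p$.
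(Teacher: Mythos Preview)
Your overall strategy matches the paper's: reduce via \eqref{regular} and the identity $\sin A-\sin B=2\sin\tfrac{A-B}{2}\cos\tfrac{A+B}{2}$ to Tseng's winning theorem for twisted badly approximable sets, intersect with the reflection $\gamma\mapsto\tfrac12-\gamma$, and push everything through the bi-Lipschitz change of variable $\beta\mapsto x$. The choice of the interval $(0,\tfrac{1}{12})$, which maps exactly onto $(0,\tfrac{1}{2\sin\theta_p})$, is neat.

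There is, however, a genuine gap in your last paragraph. The factor $\cos\pi((m+1)\alpha+\beta)$ vanishes when $(m+1)\alpha+\beta\equiv\tfrac12\pmod 1$, i.e.\ when $(m+1)\alpha\equiv\tfrac12-\beta\pmod 1$; since $\beta\in(0,\tfrac{1}{12})$ this places $(m+1)\alpha\bmod 1$ in $(\tfrac{5}{12},\tfrac12)$, \emph{not} near $\tfrac14$ or $\tfrac34$. In that regime $\sin(2\pi(m+1)\alpha)\approx\sin(\pi-2\pi\beta)=\sin(2\pi\beta)$, so $|a(p^m)-x|$ is close to $0$, not bounded below by a constant. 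Your case analysis therefore fails precisely where it is needed, and as written the argument only yields a lower bound of order $1/m^4$ for the product.

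Two easy fixes are available. First, since $\beta<\tfrac{1}{12}$ one has
\[
\|(m+1)\alpha-\beta\|+\|(m+1)\alpha+\beta-\tfrac12\|\ \geq\ \|2\beta-\tfrac12\|\ \geq\ \tfrac13,
\]
so the two factors can never be simultaneously small; one of them exceeds $\tfrac16$, and combined with the other being $>c'/m^2$ this gives the product $\gtrsim 1/m^2$. Second --- and this is what the paper actually does --- you can dispense with $\mathbf{B}(\alpha)$ altogether and work directly with $\mathbf{Bad}(\alpha)$ (the linear-rate set $n\|n\alpha-\gamma\|>c$). Then each of the two factors individually is $>c/m$, so the product is $>c^2/m^2$ with no further argument. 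Your passage to the quadratic-rate set $\mathbf{B}(\alpha)$ is an unnecessary detour that created the gap.
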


In order to prove Theorem \ref{thW1}, we introduce the set of numbers usually called \textit{twisted badly approximable numbers}: 
$$
\Bad^\vee_\theta:=\left\{x\in\R:\, \inf_{m\in\N} m\cdot \|m\theta-x\|>0\right\}.
$$

The following result is due to Tseng \cite{Tseng}:

\begin{theorem*}[T]
For any $\theta\in\R$, the set $\Bad^\vee_\theta$
is $1/8$-winning.
\end{theorem*}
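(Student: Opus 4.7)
The plan is to transfer the problem, via the identity \eqref{regular}, to a question about twisted inhomogeneous Diophantine approximation where Theorem (T) applies, and then reassemble using Theorems (D) and (S2). If $\theta_p/(2\pi)$ is rational then the sequence $(a(p^m))_{m\geq 1}$ takes only finitely many values by \eqref{regular}, so $\Bad(p,m^2)$ is the complement of a finite set in $\R$; such a set is $\alpha$-winning for every $\alpha\in(0,1)$ by the elementary strategy in which Player A, at each step, shifts slightly to avoid the finitely many forbidden points. Assume henceforth that $\theta := \theta_p/(2\pi)$ is irrational. Let $I = (0, 1/(2\sin\theta_p))$ and $J = (0, 1/12)$, and introduce the map $\phi : J \to I$ defined by $\phi(y) = \sin(2\pi y)/\sin\theta_p$. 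Since $\phi'(y) = 2\pi\cos(2\pi y)/\sin\theta_p$ is bounded between positive constants on $\overline{J}$, the map $\phi$ is a bi-Lipschitz bijection onto $I$.

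For $x = \phi(y)$ with $y \in J$, the identity \eqref{regular} gives $|a(p^m) - x|\sin\theta_p = |\sin((m+1)\theta_p) - \sin(2\pi y)|$. For $y \in J$ the equation $\sin(2\pi t) = \sin(2\pi y)$ has, modulo $1$, exactly the two solutions $t \equiv y$ and $t \equiv 1/2 - y$, at both of which the derivative of $\sin(2\pi\cdot)$ has absolute value $2\pi|\cos(2\pi y)|$, bounded away from zero on $J$. A local mean-value estimate near these two points, together with a compactness lower bound away from them, yields constants $c_1(y),c_2(y) > 0$ such that
$$
|a(p^m) - \phi(y)| \;\geq\; c_1(y)\min\bigl(\|(m+1)\theta - y\|,\ \|(m+1)\theta - (1/2 - y)\|,\ c_2(y)\bigr)
$$
for every $m \geq 1$. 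Consequently, if both $y$ and $1/2-y$ lie in $\Bad^\vee_\theta$, then $(m+1)\|(m+1)\theta - y\|$ and $(m+1)\|(m+1)\theta - (1/2-y)\|$ are bounded below by a positive constant, so $m^2|a(p^m)-\phi(y)|$ is bounded below by a positive constant; hence $\phi(y) \in \Bad(p,m^2)$.

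It therefore suffices to exhibit a winning subset of $J$ consisting of such $y$. By Theorem (T), $\Bad^\vee_\theta$ is $1/8$-winning on $\R$. Since $y \mapsto 1/2 - y$ is an isometry, Theorem (D) gives that $1/2 - \Bad^\vee_\theta$ is also $1/8$-winning on $\R$, and by Theorem (S2) the intersection $S := \Bad^\vee_\theta \cap (1/2 - \Bad^\vee_\theta)$ is $1/8$-winning on $\R$. Every Schmidt game whose initial interval $B_0$ is contained in $J$ is a particular instance of the game on $\R$, so $S \cap J$ is $1/8$-winning on $J$. Applying Theorem (D) to the bi-Lipschitz map $\phi$, the image $\phi(S \cap J) \subseteq I$ is winning on $I$, and by the previous paragraph it is contained in $\Bad(p,m^2)$; hence $\Bad(p,m^2)$ itself is winning on $I = (0, 1/(2\sin\theta_p))$.

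The main obstacle is the analytic comparison in the second paragraph: showing that a small value of $|a(p^m) - x|$ genuinely forces $(m+1)\theta$ to approach either $y$ or $1/2 - y$ modulo $1$ at a controlled rate, so that the $1/m$ lower bound afforded by Tseng's theorem translates correctly into the $1/m^2$ lower bound demanded by the definition of $\Bad(p,m^2)$. Everything else is bookkeeping with the standard Schmidt-game toolkit.
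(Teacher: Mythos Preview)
Your argument is correct and follows the same overall route as the paper's proof of Theorem~\ref{thW1}: transfer the question to twisted inhomogeneous approximation via \eqref{regular}, invoke Tseng's Theorem~(T) to obtain a winning set of parameters $y$, and push it forward to $(0,\tfrac{1}{2\sin\theta_p})$ through $y\mapsto \sin(2\pi y)/\sin\theta_p$ using Theorems~(D) and~(S2). The difference lies in the analytic comparison. The paper writes $\sin A-\sin B=2\sin\tfrac{A-B}{2}\cos\tfrac{A+B}{2}$ and bounds each factor separately by $\gtrsim 1/(m+1)$ from the conditions \eqref{al:1}--\eqref{al:3} (the last two of which are in fact equivalent, since $\tfrac12-\delta$ and $-\tfrac12-\delta$ differ by an integer), arriving at the product bound $|a(p^m)-x|\gtrsim 1/(m+1)^2$. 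Your mean-value/compactness argument yields instead the linear bound
\[
|a(p^m)-x|\;\gtrsim\;\min\bigl(\|(m{+}1)\theta-y\|,\ \|(m{+}1)\theta-(\tfrac12-y)\|,\ c_2(y)\bigr)\;\gtrsim\;\tfrac{1}{m+1},
\]
which is strictly sharper: it actually shows that $\Bad(p,m)$, not only $\Bad(p,m^2)$, is winning in $(0,\tfrac{1}{2\sin\theta_p})$. This sharpening is legitimate because for $y\in(0,\tfrac{1}{12})$ the two zeros $y$ and $\tfrac12-y$ of $t\mapsto\sin(2\pi t)-\sin(2\pi y)$ are at distance at least $\tfrac13$ apart, so $(m{+}1)\theta$ cannot be close to both simultaneously. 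Your choice $J=(0,\tfrac{1}{12})$ also makes $\phi:J\to I$ an exact bi-Lipschitz bijection onto the target interval, whereas the paper's interval bookkeeping is looser.
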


\begin{proof}[Proof of Theorem \ref{thW1}]
Let $p$ be a prime. Let us consider the sets
$$
T^{1/2}(-\Bad^\vee_{\theta_p/2\pi}):=\left\{x\in\R:\, \dfrac{1}{2}-x\in\Bad^\vee_{\theta_p/2\pi}\right\},
$$
$$
T^{-1/2}(-\Bad^\vee_{\theta_p/2\pi}):=\left\{x\in\R:\, -\dfrac{1}{2}-x\in\Bad^\vee_{\theta_p/2\pi}\right\}.
$$
 Define 
$$
X:=\Big(0,\dfrac{1}{8}\Big)\cap
\Bad^\vee_{\theta_p/2\pi}\cap T^{1/2}(-\Bad^\vee_{\theta_p/2\pi}) \cap T^{-1/2}(-\Bad^\vee_{\theta_p/2\pi}).
$$
By Theorems (D) and (S2), $X$
is winning in $(0,\frac{1}{4})$. Then Theorem  (D) again implies that $\frac{\sin 2\pi X}{\sin \theta_p}$  is winning in $(0,\frac{1}{2\sin \theta_p})$.
If we show that 
\begin{equation}\label{inclusion}
\dfrac{\sin (2\pi X)}{\sin\theta_p}\subseteq \Bad(p,m^2),
\end{equation} 
 then $\Bad(p,m^2)$ will also be winning in $(0,\frac{1}{2\sin \theta_p})$. Indeed, from the definition of Schmidt's Game, it is clear that  a winning  strategy for player A  consists in choosing respective sets $A_0,\ldots,A_s,\ldots$ that give a winning strategy for the subset in the  left hand side of  \eqref{inclusion}.
  
Next we show \eqref{inclusion}.
Let $\delta\in X$. By definition, 
there exists a positive constant $\gamma$
depending on $\delta$ and $\theta_p$ such that the following inequalities hold for all positive integers $m$:
\begin{align}
&\left|(m+1)\dfrac{\theta_p}{2\pi}-\left[(m+1)\dfrac{\theta_p}{2\pi}\right]-\delta\right|
\geq \left\|(m+1)\dfrac{\theta_p}{2\pi}-\delta\right\|
>\dfrac{\gamma}{m+1}, \label{al:1}\\
&\left|(m+1)\dfrac{\theta_p}{2\pi}-\left[(m+1)\dfrac{\theta_p}{2\pi}\right]-\dfrac{1}{2}+\delta\right|
\geq \left\|(m+1)\dfrac{\theta_p}{2\pi}-\dfrac{1}{2}+\delta\right\|
>\dfrac{\gamma}{m+1}, \label{al:2}\\
&\left|(m+1)\dfrac{\theta_p}{2\pi}-\left[(m+1)\dfrac{\theta_p}{2\pi}\right]+\dfrac{1}{2}+\delta\right|
\geq \left\|(m+1)\dfrac{\theta_p}{2\pi}+\dfrac{1}{2}+\delta\right\|
>\dfrac{\gamma}{m+1}, \label{al:3}
\end{align}
 where $[x]$ denotes a nearest integer to $x$.
 For convenience later, we choose 
 \begin{equation}\label{gamma}
 \gamma<\frac{1}{4}.
 \end{equation}
Set
$$
\alpha=(m+1)\dfrac{\theta_p}{2\pi}-\left[(m+1)\dfrac{\theta_p}{2\pi}\right].
$$ 
Clearly $-\frac{1}{2}\leq\alpha\leq\frac{1}{2}$ and
$$
\sin2\pi\alpha=\sin((m+1)\theta_p).
$$
By \eqref{al:1}, we have that
$$
\dfrac{\gamma\pi}{m+1}
<\left|\pi(\alpha-\delta)\right|\leq\dfrac{\pi}{2}+\dfrac{\pi}{8},
$$
so (using also \eqref{gamma})
\begin{equation}\label{buenosin}
\left|\sin \pi(\alpha-\delta)\right|>\left|\sin \left(\dfrac{\gamma\pi}{m+1}\right)\right|.
\end{equation}
On another hand, by \eqref{al:2} either
$$
\dfrac{\gamma\pi}{m+1}+\dfrac{\pi}{2}<\pi(\alpha+\delta)<\dfrac{\pi}{2}+\dfrac{\pi}{8},
$$
or
$$
\pi(\alpha+\delta)<\dfrac{\pi}{2}-\dfrac{\gamma\pi}{m+1}.
$$
By \eqref{al:3} we  have also that either
$$
\pi(\alpha+\delta)>-\dfrac{\pi}{2}+\dfrac{\gamma\pi}{m+1},
$$
or
$$
-\dfrac{\pi}{2}-\dfrac{\pi}{8}<\pi(\alpha+\delta)<-\dfrac{\gamma\pi}{m+1}-\dfrac{\pi}{2}.
$$
Therefore (using also \eqref{gamma})
\begin{equation}\label{buenocos}
\left|\cos \pi(\alpha+\delta)\right|>
\left|\cos \left(\dfrac{\pi}{2}-\dfrac{\gamma\pi}{m+1}\right)\right|=\left|\sin\left(\dfrac{\gamma\pi}{m+1}\right)\right|.
\end{equation}

The inequalities \eqref{buenosin} and \eqref{buenocos} imply that
\begin{align*}
|\sin ((m+1)\theta_p) - \sin 2\pi\delta|&=				2\left|\sin \pi(\alpha-\delta) \cos\pi(\alpha+\delta)\right|\\
&>2\sin \left(\dfrac{\gamma\pi}{m+1}\right)^2\\
&>\dfrac{2\gamma^2\pi^2}{(m+1)^2} \left(1- \dfrac{\gamma^2\pi^2}{6(m+1)^2}\right)^2.
\end{align*}

Hence, by using \eqref{regular} and \eqref{gamma}, there exists a positive constant $c$ depending only on $\delta$, $p$ and $f$ such that 
$$ 
\left|a(p^m) - \dfrac{\sin 2\pi\delta}{\sin \theta_p}\right| > \dfrac{c}{(m+1)^2}.
$$
This proves \eqref{inclusion}.
\end{proof}


\section{Further questions/remarks}

Theorem \ref{thW1} seems to indicate that Theorem \ref{th refinement} is optimal, i.e. that $\log$ is the best possible rate of approximation on a sequence $\left\{n=p^m\right\}_{m\in\N}$ for $p$ a fixed prime. But we have  no evidence that supports that $\log$ remains the best rate for the sequence $\left\{n\in\N\right\}$. From a metrical point of view, it follows from Corollary \ref{coroL} that $\log$ is not the best rate if the angles $\theta_p/2\pi$ that are badly approximable are dense in $[0,\frac{1}{2}]$. In this case, the rate of approximation can be made any power of $\log$.
Hence an interesting question would  be  to  investigate the distribution of the $\theta_p$ that  are badly approximable. In fact, do all the $\theta_p/2\pi$ that are irrational have the same rate of approximation by the rationals?
\\

Changing perspective slightly, one could fix $p$ and try to approximate a real number $x$ by the Fourier coefficients $a_f(p)$ as $f(z)$ varies. In this context,
Conrey, Duke and Farmer and simultaneously Serre obtained equidistribution results for the $\theta_p$ or equivalently the normalized $a(p)$.
The first three authors show in \cite{CDF} via the Selberg trace formula that, as $k\to\infty$, the set $\left\{\theta_f(p):\, f\in S_k(1)\right\}$ becomes uniformly distributed with respect to the measure
\begin{equation}\label{measure Pl}
\dfrac{2}{\pi}\left(1+\dfrac{1}{p}\right)\dfrac{\sin^2\theta}{\left(1+\frac{1}{p}\right)^2+\frac{4}{p}\sin^2\theta} d\theta.
\end{equation}
The above measure is the $p$-adic Plancherel measure, and it
is also the spectral measure of the nearest-neighbor Laplacian on a
$p+1$ regular tree (see \cite{LPS}). 
 Serre \cite{S} showed that, as $f(z)$ varies on $S_k(N)$ for any sequence $(k,N)\rightarrow\infty$, the normalized $a(p)$ are equidistributed with respect to the measure corresponding to \eqref{measure Pl} in $[-1,1]$.

\end{document}